\newcommand{\R}{\mathbb{R}}
\newcommand{\T}{\mathbb{T}}
\newcommand{\Conn}{\operatorname{Conn}}
\newcommand{\Tube}{\operatorname{Tube}}
\newtheorem{theorem}{Theorem} [section]
\newtheorem{prop}[theorem]{Proposition}
\newtheorem{cor}[theorem]{Corollary}
\theoremstyle{definition}     
\newtheorem{definition}[theorem]{Definition}
\theoremstyle{remark}
\newtheorem{remark}[theorem]{Remark}
\newtheorem{example}[theorem]{Example}
\title{A Reidemeister Theorem for Solid Ribbon Torus Links}
\author[Z. Dancso]{Zsuzsanna Dancso}
\address{School of Mathematics and Statistics\\ The University of Sydney\\ Sydney, NSW, Australia}
\email{zsuzsanna.dancso@sydney.edu.au}
\urladdr{zsuzsannadancso.net}
\author[T. Malliaras Gavrielatos]{Thomas Malliaras Gavrielatos}
\address{School of Mathematics and Statistics\\ The University of Sydney\\ Sydney, NSW, Australia}
\email{t.malliarasgavrielatos@sydney.edu.au}
\keywords{Reidemeister Theorem, ribbon torus links, tube map, welded links}
\subjclass[2000]{57K12, 57K10, 57K45}
\date{\today}
\begin{document}

\maketitle	
\begin{abstract}
A complete Reidemeister characterisation of welded links is a long-standing open problem. We present a Reidemeister Theorem for a related class of four-dimensional links, solid ribbon torus links: immersed solid tori in $\R^4$ with only ribbon singularities, considered up to generalised ribbon isotopy. 
\end{abstract}

\tableofcontents

\section{Introduction}
The purpose of this note is to prove a Reidemeister Theorem for {\em solid ribbon torus links}, which are close cousins of {\em welded links}.

Welded links are tori embedded in $\R^4$ which are fillable to solid tori with only {\em ribbon} singularities (details below). In the 1960's Yajima and Yanagawa \cite{Yajima, Yan1, Yan2, Yan3} studied ribbon 2-knots (which are spheres, rather than tori, embedded in $\R^4$). Welded knot theory in its current form emerged from the work of Fenn, Rimanyi, and Rourke \cite{Fenn-Rimanyi-Rourke} on welded braids. In this context a Reidemeister theorem -- or Artin presentation -- exists \cite{BH,Gol,Satoh}. The braid approach can be applied to prove a Reidemeister theorem for welded {\em homotopy} string links, where self-crossings of the same component are stransparent (virtualisable) \cite{ABMW1,ABMW2}.  However, a full Reidemeister description of {\em welded links} remains elusive.

A welded torus link is a {\em ribbon} embedding of finitely many disjoint tori in $\R^4$, as in \cref{IntroImage}. A ribbon embedding is one which admits a filling to immersed solid tori, where singularities are ribbon. A ribbon singularity is a transverse singular disc with two preimages: 
\begin{itemize}
\item one preimage is a contractible disc in the interior of a solid torus, and 
\item the other preimage's circle boundary is essential in the boundary of a solid torus. 
\end{itemize}
Welded torus links are considered up to ambient isotopy via {\em generalised ribbon} embeddings (\cref{generalised singularities for ribbons}).

\begin{figure}[t]
\centering
\includegraphics[width=4cm]{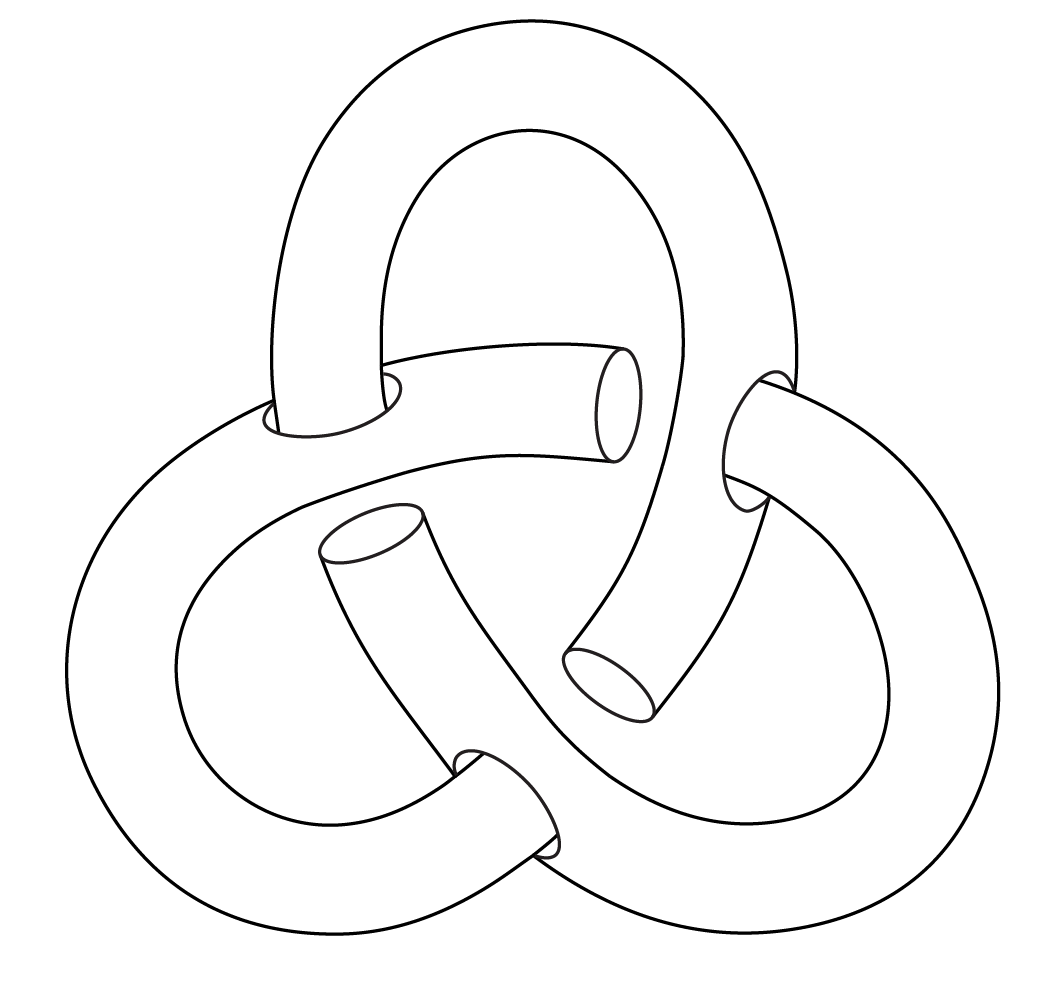}
\hspace{2cm}
\includegraphics[width=6cm]{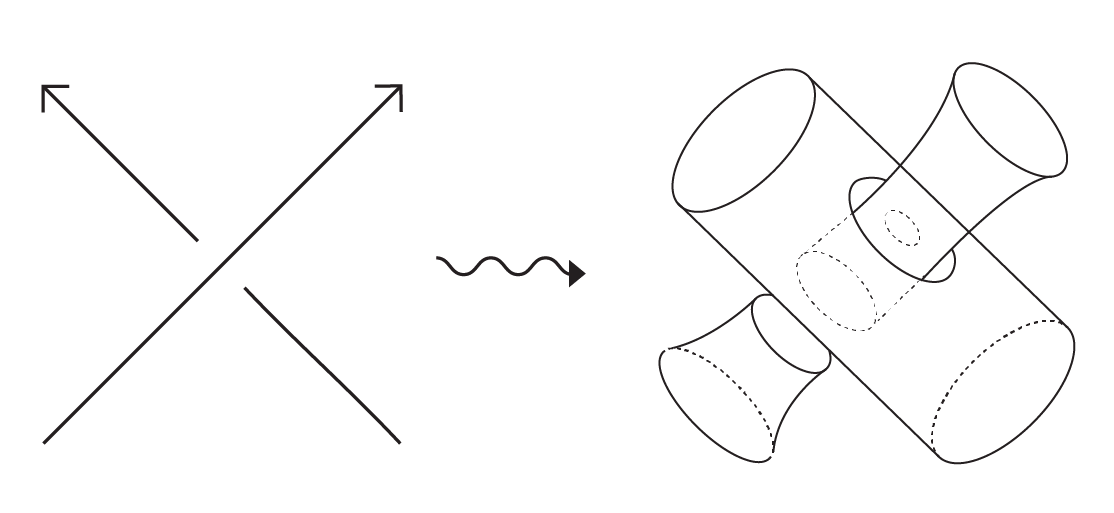}
\caption{On the left, the welded trefoil knot. On the right, the bulding block corresponding to a positive crossing. Both are shown as broken surface diagrams: the broken parts of the surface are lower in the 4th coordinate.}\label{IntroImage}
\end{figure}

A welded link diagram (\cref{def:WeldedDiag}) is a finite set of immersed circles in $\R^2$, with two types of transverse double points called {\em crossings}. Crossings can be classical (endowed with over/under information) or {\em virtual} (without over/under information). Welded link diagrams are considered up to a set of welded {\em Reidemeister moves} (\cref{Reid_moves}).

To each welded link diagram, the {\em Tube map} \cite{Yajima, Satoh, BND} associates a welded link, mapping equivalent diagrams to isotopic links. Informally, to each classical crossing the Tube map associates a ``building block'' consisting of a tube braided through another as in the right of \cref{IntroImage}; or equivalently obtained by spinning a classical crossing around a regular projection plane embedded in $\mathbb{R}^4$. The building blocks are then connected by gluing in annuli according to the diagram. Virtual crossings are ignored: they are only important in their capacity to allow non-planar connections between the ends of crossings. 

It is easy to see that the welded Reidemeister moves are in the kernel of the tube map. It is known that the kernel is at least slightly larger than this, by a global symmetry \cite{Satoh, Win}. It remains an open question to fully determine the kernel; the key ingredient missing is a combinatorial understanding of {\em filling changes}.

In this note we circumvent the filling change issue by including the filling information in the data of a link: that is, by studying {\em solid ribbon torus links}. Using Satoh's \cite{Satoh} generalisation of Yajima's  Tube map, Audoux (\cite[Prop. 3.7]{Audoux}, also see \cite{Audoux:WT}) gave a sketch of a proof for a Reidemeister theorem for solid ribbon string links up to strict ambient isotopy (no Reidemeister moves). Our goal is to prove the following welded Reidemeister theorem -- stated without proof in \cite{AM} -- which gives a full combinatorial description for solid ribbon torus links up to generalised ribbon isotopies, showing that these always correspond to welded Reidemeister moves:

\begin{theorem}\label{thm:main}
Welded link diagrams modulo welded moves are in bijection with solid ribbon torus links up to generalised ribbon isotopy. 
\end{theorem}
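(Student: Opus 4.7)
The plan is to show that the Tube map induces the claimed bijection. Since welded Reidemeister moves are already known to lie in the kernel of the Tube map, we have a well-defined candidate map from welded link diagrams modulo welded moves to solid ribbon torus links up to generalised ribbon isotopy; the task is to show it is both surjective and injective on equivalence classes.

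My first step is to construct a diagram-extraction map $\mathsf{Diag}$ at the level of representatives. Given a solid ribbon torus link $L \subset \R^4$ equipped with an immersed solid-torus filling whose singularities are all ribbon, I would choose a generic projection onto a three-dimensional subspace, realise $L$ as a broken surface diagram, and then project the cores of the tubes to a plane. Ribbon singularities contribute classical crossings, with over/under data dictated by the fourth-coordinate ordering of the two sheets at each singular disc (the sheet whose boundary circle is essential in a solid torus plays the role of the over-strand). All other transverse double points of the projected cores become virtual crossings. A direct local check at each ribbon singularity shows that the Tube map applied to $\mathsf{Diag}(L)$ recovers $L$ up to generalised ribbon isotopy, establishing surjectivity of the induced map on classes.

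For injectivity I must show that $\mathsf{Diag}$ descends to equivalence classes: if $L_0$ and $L_1$ are generalised-ribbon-isotopic representatives, then $\mathsf{Diag}(L_0)$ and $\mathsf{Diag}(L_1)$ are related by welded moves. The plan is to parametrise a generalised ribbon isotopy by $t \in [0,1]$, pick a generic height function so that the projected diagram $D_t = \mathsf{Diag}(L_t)$ is a welded link diagram for all but finitely many critical times, and classify the codimension-one events occurring at those critical times. Away from critical times, $D_t$ varies by planar isotopy. At a critical time, exactly one elementary event occurs, such as a tangency of projected sheets, a triple point, a birth--death of a pair of ribbon singularities, or a transient generalised-ribbon event. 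A case-by-case matching then identifies each elementary event with a welded Reidemeister move (classical, virtual, or mixed). Concatenating these local identifications produces a finite sequence of welded moves between $D_0$ and $D_1$. Combined with the verification that $\mathsf{Diag} \circ \mathsf{Tube}$ returns a diagram welded-equivalent to the input, this yields injectivity.

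The hard step will be the enumeration of codimension-one generic events in the space of generalised ribbon embeddings, together with the verification that each event corresponds to a welded move. This is the ribbon-solid analogue of the Roseman/Carter--Saito movie-move analysis for surfaces in $\R^4$, and the difficulty is precisely that the right ``moves'' need to be read off the broken-surface bifurcations without losing filling data. A key subtlety is that allowing \emph{generalised} ribbon singularities along the isotopy is essential: some codimension-one bifurcations transiently violate the strict ribbon condition, and without them certain virtual moves on the diagram side (notably the virtual Reidemeister moves and the mixed R3) would not be realisable by an isotopy of the solid ribbon torus links. To manage the analysis I would localise: cut each elementary event to a ball meeting the link in a small string-link tangle, apply Audoux's string-link Reidemeister theorem \cite{Audoux} inside the ball as a local model, and then reassemble globally, using the torus topology only to close up the string-link pieces without creating new bifurcations.
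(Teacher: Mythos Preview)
Your overall architecture matches the paper's: build an inverse to the Tube map and show both composites are the identity on equivalence classes. The key divergence is in how the inverse is built. You propose a projection-based $\mathsf{Diag}$ extracted from a broken surface diagram; the paper instead defines a purely combinatorial \emph{Connection} map $\Conn$ which records, for each solid ribbon torus link, only (i) the sign of each ribbon singularity and (ii) the partial cyclic order in which the essential and contractible singularity preimages appear along each torus. No projection is ever taken. This buys a lot: virtual crossings and the rerouting/virtual moves come for free, since $\Conn$ is only defined up to how one draws the connecting arcs in the plane, and the OC move holds automatically because contractible preimages in the same chamber are incomparable. The analysis of a generalised ribbon isotopy then reduces to watching preimage discs move inside the domain tori: a singularity birth/death gives R1, a Type~2 (tangential) generalised singularity gives R2, and a Type~3 (nested) one gives R3. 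That is the entire case analysis. Your Roseman/Carter--Saito route would instead have to enumerate all codimension-one projection events, many of which correspond to virtual moves the paper never needs to confront, and would then still need the injectivity argument (matching two links with the same diagram), which in the paper uses an Alexander-trick flattening of essential discs rather than anything from the movie analysis.

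Two specific misconceptions in your outline are worth flagging. First, at a ribbon singularity the two 3-dimensional pieces genuinely intersect in $\R^4$; there is no fourth-coordinate ordering between them, so the over/under data cannot come from height. It comes from the essential/contractible dichotomy of the preimage discs together with orientation data, and in the paper's conventions the essential preimage corresponds to the \emph{under}-strand, not the over-strand. Second, the generalised ribbon singularities are not what make the virtual or mixed moves hold: the OC move is realised by a \emph{strict} ribbon isotopy (two discs flying through a tube can be reordered side by side), and the virtual moves are absorbed into the very definition of $\Conn$. The Type~2 and Type~3 generalised singularities are precisely, and only, what produce R2 and R3.
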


The proof expands on Audoux's idea \cite[Prop. 3.7]{Audoux} to define and study the inverse map to Satoh's tube map, called the {\em connection map}.

\subsection*{Acknowledgements} The authors are grateful to Benjamin Audoux, Dror Bar-Natan and Hans Boden for several insightful conversations. We thank the Knot at Lunch Student Group: Alec Elhindi, Grace Garden, Tamara Hogan, Damian Lin, James Morgan, Tilda Wilkinson-Finch, and Grace Yuan for their insights and support. We also thank Brigitte Podrasky for her contribution of many illustrations.

\section{Solid ribbon torus links and welded link diagrams} 
In this section we introduce solid ribbon torus links and welded link diagrams in detail. 

\subsection{Solid ribbon torus links} Throughout, we denote by $\T$ an oriented solid torus $B^2 \times S^1$.

\begin{definition}\label{ribbon singularity}
For an immersion  $\cup_{i\in\{1,\dots,n\}}\T_{i}\rightarrow \R^{4}$, a {\em ribbon singularity} is a flatly transverse disc in the image. The preimage is a disjoint union of two discs: of these, one disc is in the interior of one of the tori $\T_{i}$; the other has its interior in the interior of one of the tori $\T_{j}$, and its boundary is essentially embedded in $\partial\T_{j}$. (Possibly $i=j$.) If an immersion has only ribbon singularities, it is called a {\em ribbon immersion}. 
\end{definition}

\begin{definition}
For $n\geq 1$, an oriented {\em solid ribbon torus link} with $n$ components is a ribbon immersion of $n$ oriented solid tori $\sqcup_{i\in \{1,\dots,n\}}\T_i$ in $\R^4$. Each solid torus $\T=B^2\times S^1$ is equipped with a {\em directed core} $\{0\}\times S^1 \subseteq \T$: an oriented circle.
\end{definition}

Next, we define equivalence of solid ribbon torus links. Ambient isotopy is a very fine notion of equivalence, which diagrammatically corresponds to welded diagrams but no Reidemeister moves \cite[Prop. 3.7]{Audoux}. In other words, Reidemeister moves relate solid ribbon torus links which belong to distinct ambient isotopy classes. To achieve a notion of equivalence more in line with intuition -- diagrammatically corresponding to welded diagrams up to welded moves -- one needs to introduce {\em generalised ribbon isotopies}, which are allowed to move through finitely many moments of immersions with {\em generalised ribbon singularities} \cite[Sec. 3]{AM}.

\begin{definition}\label{generalised singularities for ribbons}
Given an immersion $\cup_{i\in \{1,\dots,n\}}\T_{i}\rightarrow \R^{4}$, we say that a connected component $D$ of the singular set, contained in an open four-ball $B\subseteq \R^4$, is a {\em generalised ribbon singularity} (see also \cref{ribbonsingularities}) if it belongs to one of the following types, given in terms of a a local system of coordinates $\varphi$ for $B\cong \R^4$: 
\begin{enumerate} 
    \item[(0)] Type 0 (strict ribbon singularity, as in Definition~\ref{ribbon singularity}), where there is a local system of coordinates $\varphi$ for $B\cong \R^4$, such that $\varphi^{-1}(B)$ is a disjoint union $B_{1}\sqcup B_{2}$ of two 3-balls, and
    
    $\begin{dcases}
    \varphi(B_1)=\{(t,rcos(s),rsin(s),0)\} & t,s\in \R, r\in[0,2] \\
    \varphi(B_2)=\{(0,rcos(s),rsin(s),t)\} & t,s\in \R, r\in [0,1]
    \end{dcases}$
    
    \item[(2)] Type 2 (tangential, see bottom left of Figure~\ref{ribbonsingularities}), where there is a local system of coordinates $\varphi$ for $B\cong \R^4$ such that 
 $\varphi^{-1}(B)$ is the disjoint union $B_{1}\sqcup B_{2}$ of two 3-balls, and     
 
    $\begin{dcases}
    \varphi(B_1)=\{(0,rcos(s),rsin(s),t)\} & t,s\in \R, r\in[0,2] \\
    \varphi(B_2)=\{(-t^2,rcos(s),rsin(s),t)\} & t,s\in \R, r\in [0,1]
    \end{dcases}$
    
    \item[(3)] Type 3 (nested, see bottom right of Figure~\ref{ribbonsingularities}), where there is a local system of coordinates $\varphi$ for $B\cong \R^4$ such that  $\varphi^{-1}(B)$ is the disjoint union $B_{1}\sqcup B_{2}\sqcup B_{3}$ of three 3-balls, and 
    
    $\begin{dcases}
    \varphi(B_1)=\{(0,rcos(s),rsin(s),t)\} & t,s\in \R, r\in [0,3] \\
    \varphi(B_2)=\{(t,rcos(s),rsin(s),t)\} & t,s\in \R, r\in [0,2] \\
    \varphi(B_3)=\{(2t,rcos(s),rsin(s),t)\} & t,s\in \R, r\in [0,1]
    \end{dcases}$
\end{enumerate} 
\end{definition}

\begin{figure}
    \centering
    \includegraphics{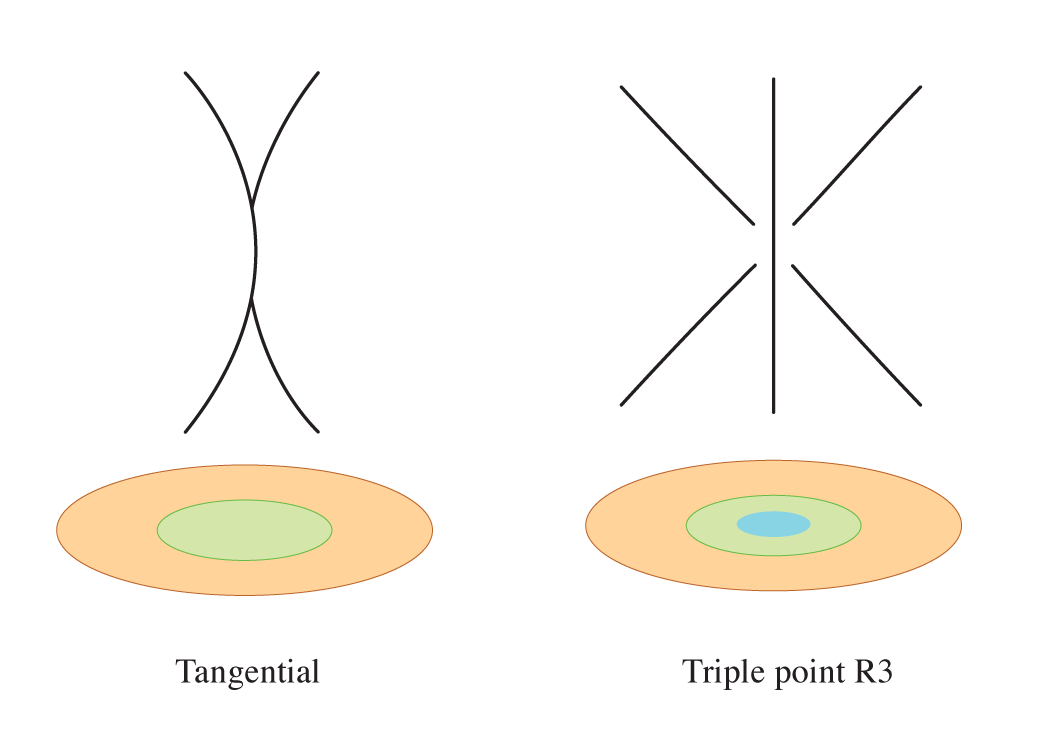}
    \caption{Type 2 and Type 3 generalised ribbon singularities in the preimage.}
    \label{ribbonsingularities}
\end{figure}

\begin{remark}
We will see later that the Type 2 and 3 singularities correspond to Reidemeister 2 and 3 moves.
There is no ``Type 1'' singularity, since a Reidemeister 1 move merely creates a cusp, not a new type of singularity.
\end{remark}

\begin{definition}
A {\em generalised ribbon isotopy} of solid ribbon torus links is a regular homotopy via ribbon immersions which may pass through finitely many moments of generalised ribbon immersions. 
\end{definition}

\subsection{Welded diagrams and the Tube map}
This note proves that solid ribbon torus links modulo generalised ribbon isotopy
are represented by welded diagrams modulo welded Reidemeister moves. We now define welded diagrams, and the enhanced Tube map, which relates them to solid ribbon torus links.

\begin{definition}\label{def:WeldedDiag}
An {\em oriented welded link diagram} (or {\em welded diagram} for short) is an immersion of a disjoint union of oriented circles in $\R^{2}$, with only transverse double points, each of which belongs to one of two types: classical crossings -- which may be positive $\neovnwarrow$ or negative $\nwovnearrow$ -- and virtual crossings $\tona$. Welded diagrams are considered modulo planar isotopy as four-valent graphs; the welded Reidemeister moves shown in \cref{Reid_moves}; and the convention that a purely virtual path (a path which goes through only virtual crossings) can be equivalently re-routed in any other purely virtual way (see \cref{fig:Rerouting}). 
\end{definition}

\begin{figure}
    \centering
    \includegraphics[width=14cm]{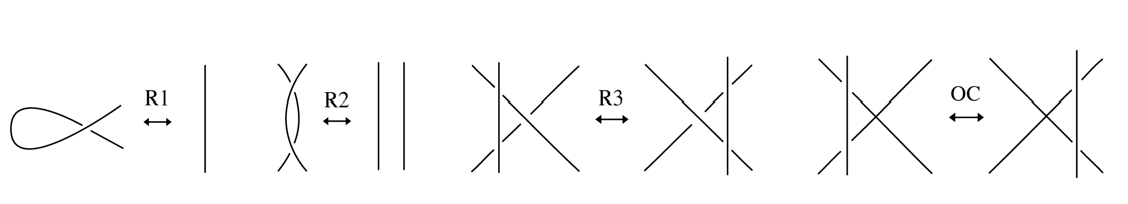}
    \caption{The Reidemeister moves for welded diagrams. Orientations are suppressed: the relations are imposed in all consistent orientations of the strands.}
    \label{Reid_moves}
\end{figure}

\begin{figure}
\centering
 \includegraphics[width=7cm]{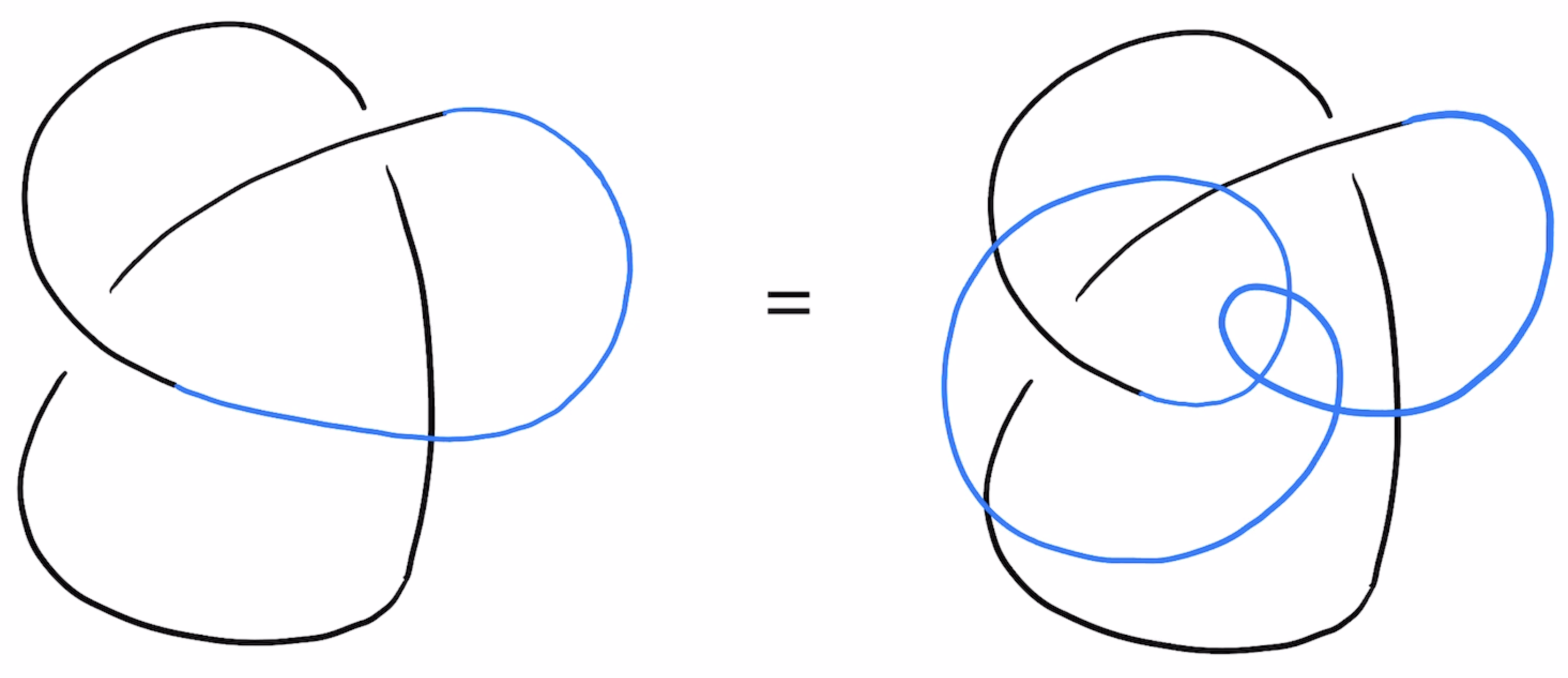}
    \caption{An example of the rerouting move for a purely virtual strand.}
    \label{fig:Rerouting}
\end{figure}

\begin{remark}
 The re-routing convention can be replaced with a set of three {\em virtual Reidemeister} moves and one {\em mixed} move. We prefer the re-routing convention, which is a step closer to Gauss codes. Gauss codes encode welded diagrams as a list of classical crossings, and the list of connections between their endpoints, ignoring the planar embedding (and virtual crossings) of the strands which create these connections. 
\end{remark}

There are three maps which clarify the relationship between solid ribbon torus links and welded diagrams. The enhanced spun and tube maps \cite{Satoh} construct solid ribbon torus links from diagrams: the enhanced spun map from classical diagrams (with no virtual crossings), and the enhanced tube map  from all welded link diagrams. The connection map \cite{Audoux} constructs a welded diagram from a solid ribbon torus link. We will prove that for solid ribbon torus links up to generalised ribbon isotopy, the enhanced tube map and the connection map are mutual inverses.

\medskip
The most intuitive of the three maps is the {\it enhanced Spun map} for link diagrams with only classical crossings. Such a diagram represents a link in $\R^3$: that is, an embedding $\cup_{i\in\{1,...,n\}}S^1 \hookrightarrow \R^3$. The enhanced Spun map, denoted $Spun^\bullet$, associates to this classical link a solid ribbon torus link, by including $\R^3$ in $\R^4$ and spinning the link in $\mathbb{R}^4$ around a regular projection plane. The output is the surface ``drawn'' by the spun link, filled by the union of projection rays: this indeed is a solid ribbon torus link with one ribbon singularity for each crossing in the link diagram.

The {\em enhanced tube map} generalises this construction to all welded diagrams: 
$$\text{Tube}^{\bullet}:\{\text{welded link diagrams}\}\rightarrow \{\text{solid ribbon torus links}\}$$ 

Applying $\text{Spun}^{\bullet}$ to only a disc neighbourhood of a classical crossing results in two cylinders $B^2\times I$, one of which is threaded through the other in $\R^4$: call this a spun crossing. 
Given a welded diagram with $r$ classical crossings, for each classical crossing $\text{Tube}^{\bullet}$ embeds a spun crossing in disjoint 4-balls $B^4$ in $\R^4$, in which the two bottom and two top discs $B^2\times \{0\}$ and $B^2\times \{1\}$ of the spun crossing are on the boundaries $\partial B^4$: call these {\em boundary discs}.  
Denote by $Y$ the complement of the $r$ 4-balls which contain the spun crossings. In the diagram there are strands (plane-immersed intervals) connecting the ``ends'' of the crossings. For each such embedded interval, glue in a regularly embedded $D^{2}\times I$ in $Y$ to the appropriate boundary discs in $\partial Y$.

\begin{prop}\label{well-defined}
Up to strict ribbon isotopy, $\text{Tube}^{\bullet}$ is well-defined. 
\end{prop}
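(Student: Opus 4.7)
The plan is to enumerate the choices involved in constructing $\text{Tube}^{\bullet}$ and verify that each leads to the same solid ribbon torus link up to strict ribbon isotopy. The construction depends on: (a) the placement of the $r$ disjoint $4$-balls $B^4\subseteq\R^4$ hosting the spun crossings; (b) the parametrisation of each spun crossing inside its host $4$-ball, subject to matching the prescribed boundary discs; and (c) the choice of embedded cylinders $D^2\times I\subseteq Y$ which fill in the strand data by joining the appropriate pairs of boundary discs in $\partial Y$. In addition, a welded diagram is only defined up to planar isotopy and up to the rerouting convention on purely virtual paths, so one must also check that these combinatorial equivalences translate into strict ribbon isotopies of the output.

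Items (a) and (b) are straightforward. For (a), any two configurations of $r$ disjoint, ordered, oriented $4$-balls in $\R^4$ are ambient isotopic; applying such an isotopy, with the spun crossings and attached cylinders carried along, yields a strict ambient isotopy of the output. For (b), the spun crossing has a canonical local model determined by the sign and orientations of the underlying classical crossing, and any two coordinate charts realising this model differ by an isotopy rel the four boundary discs, which extends by the identity to $Y$.

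The content, and the step I expect to be the main obstacle, is (c). One must show that any two regular embeddings of the cylinder $D^2\times I$ into $Y$ with matching prescribed boundary discs are ambient isotopic rel boundary through embeddings, and moreover that this can be done simultaneously for all strands without creating new intersections. The natural reduction is to the cores: a core is a properly embedded arc in the $4$-manifold $Y$ with fixed endpoints, and any two such arcs are ambient isotopic rel endpoints because arcs have codimension $3$, so general position prevents any knotting during a generic homotopy. A tubular neighbourhood argument then promotes this to an isotopy of the thickened cylinder, with no framing ambiguity since the end discs are prescribed. The simultaneous version proceeds by induction on strands: after fixing some of the cylinders, any further strand can be isotoped in the complement of the fixed ones, which is still a path-connected $4$-manifold with enough transversal room. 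Finally, planar isotopy of the diagram produces an obvious ambient isotopy of the construction, and rerouting a purely virtual strand amounts to choosing a different cylinder connecting the same pair of boundary discs, which is exactly the situation controlled by (c). Together these checks establish well-definedness of $\text{Tube}^{\bullet}$ up to strict ribbon isotopy.
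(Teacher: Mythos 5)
Your proposal takes essentially the same route as the paper: reduce each connecting cylinder $D^2\times I$ to a core arc, use the low dimension of arcs in the $4$-manifold $Y$ to conclude that all such arcs rel endpoints are isotopic, and observe that the remaining choices (placement of the $4$-balls, local models of the spun crossings, rerouting of virtual strands) are absorbed by ambient isotopy. The one point where you are too quick is the claim of ``no framing ambiguity since the end discs are prescribed'': prescribing the ends pins the framing only at the endpoints, and one still needs the observation that a framing along the core is a path in $S^2$ and that $\pi_1(S^2)=1$ (as the paper argues) to conclude that any two framings rel endpoints are homotopic.
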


The proof we present is a slightly extended version of the corresponding part of \cite[proof of Proposition 3.7]{Audoux}:

\begin{proof} In the $\text{Tube}^{\bullet}$ construction, choices of specific embeddings are made when gluing in the connecting solid tubes $D^2 \times I$. We need to show that such embeddings always exist and are equivalent up to isotopy.

Indeed, since each $D^2\times I$ is embedded in $Y$, it can be retracted to a path  connecting $D^2 \times \{0\}$ to $D^2 \times \{1\}$. The embedding of $D^2\times I$ specifies a {\em framing}\footnote{A framing for a path is a section of the normal bundle, that is, a continuous choice of normal vectors along the path.} for this path: namely, choose the positive normal vector to $D^2\times I$ in $\R^4$ at each point. Call this path a {\em framed core} for the embedded cylinder $D^2 \times I$. 

On the other hand, a framed path $\gamma$ between the boundary 2-balls can be ``inflated'' to a regularly embedded 3-ball $D^2 \times I$ connecting these 2-balls. To do so, for each point $x\in \gamma$, consider a 2-dimensional neighbourhood of $x$ which is orthogonal to both the derivative of $\gamma$ and the framing, and take the union of a continuous choice of such neighbourhoods.

These retractions and inflations are mutual inverses up to isotopy of, on one hand, the embedded 3-balls fixing $D^2 \times \{0\}$ and $D^2 \times \{1\}$; and on the other hand the framed paths, fixing the endpoints. Thus, two embeddings of $D^2\times I$ between the same boundary 2-balls are isotopic if and only if their framed cores are. The framing along a path $\gamma$ is always orthogonal to $\gamma$, so it can be seen as a path on $S^2$, and since $\pi_1(S^2)=1$, all framings are equivalent up to isotopy. Thus, it is enough to consider isotopy of unframed paths. Finally, since the paths 1-dimensional, all embeddings in the punctured 4-ball $Y$ (with the given starting and ending points) are isotopic.
\end{proof}

\begin{prop}\label{R-moves rep isotopies}
Under the $\Tube^\bullet$ map, the two sides of each welded Reidemeister move are equivalent under generalised ribbon isotopy.
\end{prop}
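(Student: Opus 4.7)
The plan is to verify each welded Reidemeister move separately. Proposition~\ref{well-defined} handles all concerns about the placement of connecting cylinders, so the argument can be localized to a 4-ball containing the disc of the move; the rest of the diagram is mapped identically on both sides and is glued in by cylinders with the same endpoints, hence gives a strictly ribbon isotopic contribution.

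I would first dispense with the purely virtual moves and the mixed over-commute move. The Tube map only uses virtual crossings to encode the connectivity between endpoints of spun crossings, so both sides of such a move feed the \emph{same} spun crossings, with the same endpoint-connectivity data, into the Tube construction. By Proposition~\ref{well-defined} the two outputs are related by a strict ribbon isotopy, which is in particular a generalised ribbon isotopy. This also covers the rerouting convention of \cref{fig:Rerouting}.

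For Classical R1, in a 4-ball around the kink the spun image has a single ribbon self-singularity of a tube, produced by spinning an inessential self-crossing of a planar strand. I would write down an explicit regular homotopy, supported in this 4-ball, that unbraids the tube through itself and removes the singular disc without introducing any new singularity (so no generalised singularity is needed). For Classical R2, in a 4-ball containing the two opposite crossings, the Tube image consists of a pair of tubes with two ribbon singularities of opposite sign. The plan is to construct a regular homotopy that slides the two singular discs together so that at one moment they meet tangentially along a circle, realising precisely the Type~2 coordinate model of Definition~\ref{generalised singularities for ribbons}; immediately past this moment the singularities cancel and the two tubes separate, giving the opposite side of R2. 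For Classical R3, the analogous construction produces a regular homotopy that passes through a single moment in which three ribbon singular discs meet in the configuration modelled by the Type~3 nested coordinates; past this moment the three crossings persist but are re-arranged into the other cyclic order, giving the other side of R3.

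The main obstacle is the explicit construction of the local four-dimensional regular homotopies for R2 and R3, together with the verification that at the critical instant the ambient coordinates can be chosen to match the Type~2 and Type~3 models of Definition~\ref{generalised singularities for ribbons} on the nose. One must keep simultaneous control of the tubes, their spun framings, and the essential/contractible boundaries of the ribbon discs, and check that all other moments of the homotopy pass through strict (Type~0) ribbon singularities. The remainder of the argument is bookkeeping: matching orientations and framings so that the local coordinate change produced by the homotopy is compatible with the ambient spun structure, which again can be corrected up to strict ribbon isotopy by Proposition~\ref{well-defined}.
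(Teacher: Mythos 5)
Your treatment of the classical moves R1, R2 and R3 is essentially sound, and in fact the paper gets them more cheaply: since these moves are purely classical, $\Tube^\bullet$ agrees with $Spun^\bullet$ on a neighbourhood of the move, and one simply spins the corresponding classical link isotopy in $\R^3$ to obtain the required generalised ribbon isotopy (strict for R1, passing through a single Type~2 moment for R2 and a single Type~3 moment for R3). Your plan to build the local four-dimensional regular homotopies by hand would work, but the spinning observation removes the ``main obstacle'' you identify.

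The genuine gap is the OC (overcrossings commute) move, which you attempt to dispense with alongside the purely virtual and detour moves on the grounds that both sides feed ``the same endpoint-connectivity data'' into the Tube construction. That is not true. The two sides of OC differ in the \emph{order} in which the over-strand traverses the two classical crossings: on one side the connecting cylinder runs from the outgoing over-boundary-disc of crossing $1$ to the incoming over-boundary-disc of crossing $2$, and on the other side this is reversed. \cref{well-defined} only asserts that two embedded cylinders joining the \emph{same} pair of boundary discs are isotopic; it says nothing about two different gluing patterns, so it cannot be invoked here. This case requires an actual isotopy argument, which the paper supplies by presenting both sides as movies of flying discs -- one disc flying through a second, then a third flying through the second, versus the opposite order -- and interpolating through the movie in which the two discs fly through side by side simultaneously (\cref{fig: OC-FlyingCircles}); this is a strict ribbon isotopy. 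Your argument does correctly cover the rerouting convention (equivalently the purely virtual and mixed/detour moves), where the connectivity data really is unchanged, but OC is precisely the welded move for which connectivity changes and a geometric input is needed.
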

\begin{proof}
The Reidemeister 1, 2, and 3 moves of welded link diagrams are local equivalences between locally classical string link diagrams. Thus, $Tube^\bullet$ and $Spun^\bullet$ coincide for these moves, and applying $Spun^\bullet$ to the corresponding classical link isotopies gives generalised ribbon isotopies of solid ribbon torus links. In fact, R1 gives a strict isotopy, R2 a generalised ribbon isotopy with a single type 2 singularity, and R3 a generalised ribbon isotopy with a single type 3 singularity.  

The OC move involves a virtual crossing, so it does not follow from classical isotopies. We represent the $Tube^\bullet$ image of both sides as movies of flying discs as shown in \cref{fig: OC-FlyingCircles}: On the left, disc one (blue) flies through disc two (orange), and then disc three (red) flies through disc two (orange). On the right, disc three (red) flies through disc two (orange) first, then disc one (blue) flies through disc two. These movies are isotopic via the middle movie of discs one and three flying through disc two side by side at the same time. This is a strict ribbon isotopy of solid ribbon torus links.
\end{proof}

\begin{figure}
    \centering
    \includegraphics[width=11cm, height=8cm]{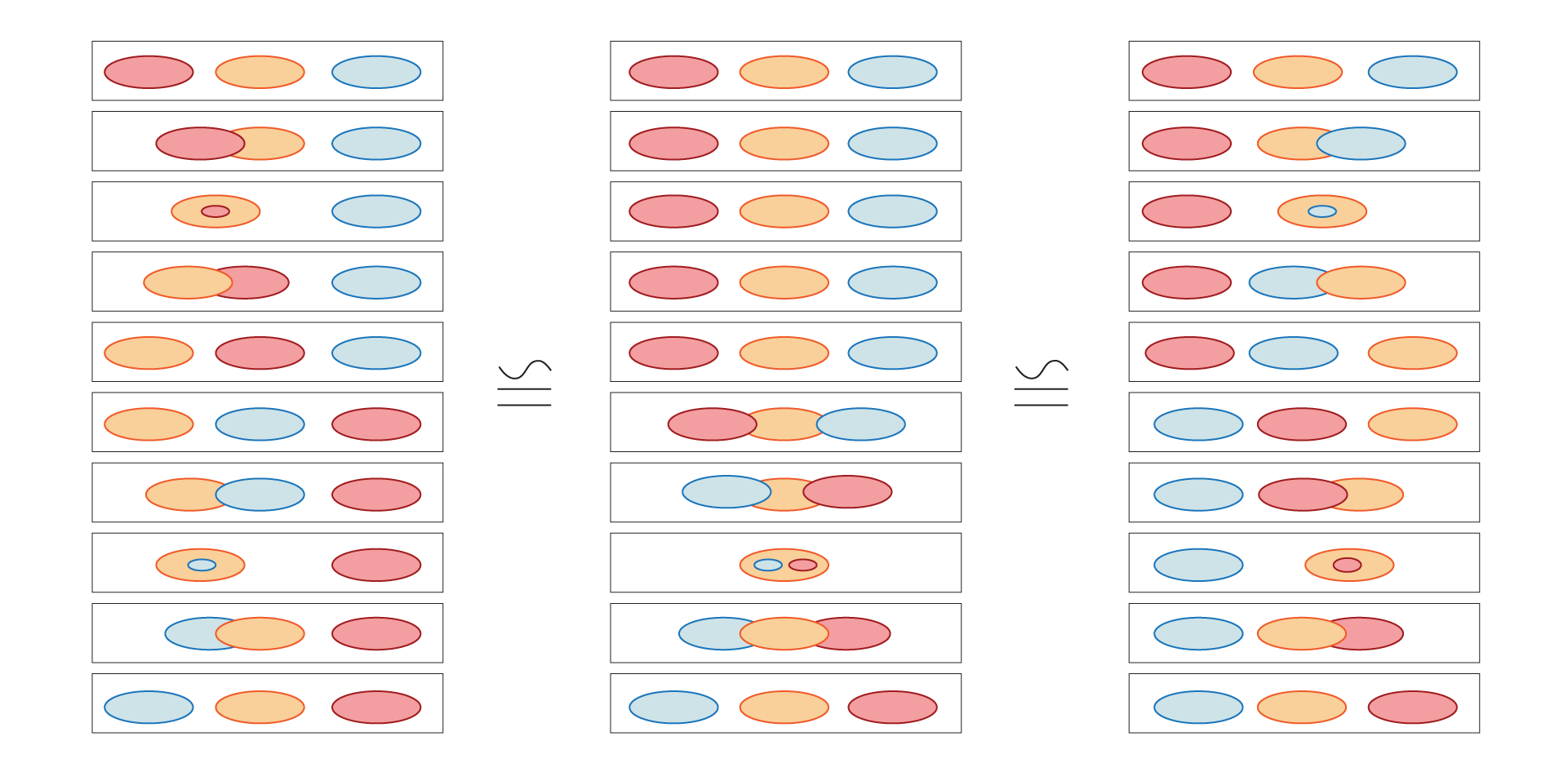}
    \caption{The $Tube^\bullet$ image of the OC move, represented as a homotopy of movies of flying discs.}
     \label{fig: OC-FlyingCircles}
\end{figure}

\begin{cor}
The enhanced tube map descends to a well-defined map on generalised welded link diagrams modulo welded Reidemeister moves. 
\end{cor}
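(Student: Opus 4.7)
The plan is to combine the two preceding propositions in a direct way. Proposition~\ref{well-defined} already tells us that, given a fixed welded diagram, the construction $\Tube^{\bullet}$ produces a solid ribbon torus link well-defined up to strict ribbon isotopy. Proposition~\ref{R-moves rep isotopies} tells us that the welded Reidemeister moves R1, R2, R3, and OC all map to generalised ribbon isotopies. So at the level of equivalence classes, $\Tube^{\bullet}$ descends as claimed. The only genuinely new content to verify is that the two ``structural'' equivalences built into the definition of a welded diagram — planar isotopy as a 4-valent graph and the virtual rerouting convention — are also respected.

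For these, I would appeal to the same mechanism used in the proof of Proposition~\ref{well-defined}. In the $\Tube^{\bullet}$ construction, a welded diagram only supplies two pieces of data that affect the output: the (finite) collection of boundary discs coming from spun crossings, and an unordered matching of these discs together with, for each matched pair, an isotopy class of framed embedded paths in the complement $Y$. Planar isotopy of the diagram changes neither the boundary disc data nor the homotopy class of the connecting strands, so the associated framed cores remain isotopic in $Y$ and the output is strictly ribbon isotopic. Virtual rerouting of a purely virtual subpath only changes which path in the plane realises a given connection between two boundary discs; since $Y$ is a punctured $4$-ball and any two framed paths between the same endpoints are isotopic there (by the $\pi_1(S^2)=1$ argument already used), the resulting cylinders are isotopic through embeddings, and again the output is strictly ribbon isotopic.

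Combining these observations gives a well-defined map from welded diagrams (modulo planar isotopy and rerouting) to solid ribbon torus links modulo strict ribbon isotopy. Quotienting the source by the welded Reidemeister moves and the target by generalised ribbon isotopy, Proposition~\ref{R-moves rep isotopies} ensures that this map further descends, which is exactly the corollary. I do not anticipate a real obstacle here; the only point that requires any thought is explicitly checking that the rerouting convention fits into the framed-path argument of Proposition~\ref{well-defined}, and that is immediate once one notes that the virtual crossings contribute nothing to the target beyond dictating which boundary discs should be connected.
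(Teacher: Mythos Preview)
Your proposal is correct and matches the paper's approach: the paper states this corollary without proof, treating it as an immediate consequence of Propositions~\ref{well-defined} and~\ref{R-moves rep isotopies}. Your explicit check that planar isotopy and the virtual rerouting convention are absorbed by the framed-path argument of Proposition~\ref{well-defined} is a welcome elaboration of something the paper leaves implicit, but there is no divergence in method.
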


 The goal of this section is to prove \cref{thm:main}, namely, to show that there exists a bijection between solid ribbon torus links (up to generalised ribbon isotopy), and welded link diagrams up to welded equivalence. In order to do so we introduce the Connection ($\Conn$) map inspired by \cite[Section 3.2]{Audoux}, and show that it is inverse to $Tube^\bullet$. In particular, the $\Conn$ map assigns a welded diagram to a solid ribbon torus link: crossings are associated to ribbon singularities and connected based on the sequence of preimages of singularities within each torus. The following theorem is a more precise version of Theorem~\ref{thm:main}:
 
\begin{theorem}\label{Thm: Conn} The connection map is a set bijection 
$$\Conn: \frac{\{\text{solid ribbon torus links}\}}{\text{generalised ribbon isotopy}}\rightarrow \frac{\{\text{welded diagrams}\}}{\text{welded  equivalence}}.$$ 
\end{theorem}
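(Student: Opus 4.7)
The plan is to prove \cref{Thm: Conn} by constructing the connection map $\Conn$ explicitly on representatives and then verifying that $\Conn$ and $\Tube^\bullet$ descend to mutually inverse bijections on the appropriate equivalence classes. Since $\Tube^\bullet$ has already been shown to be well-defined modulo welded equivalence, the remaining tasks are: (a) define $\Conn$ on solid ribbon torus links; (b) show it descends to generalised ribbon isotopy classes; (c) prove $\Conn \circ \Tube^\bullet = \id$ on welded diagrams modulo welded moves; and (d) prove $\Tube^\bullet \circ \Conn = \id$ on solid ribbon torus links modulo generalised ribbon isotopy.

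To construct $\Conn$, I would first perturb the input $L$ by strict ribbon isotopy into general position with respect to a generic projection $\R^4 \to \R^2$: in particular, the directed cores of the tori project to a collection of immersed oriented circles in $\R^2$ whose only multiple points are transverse double points, and the ribbon singularities project to distinct double points. The crucial observation is that at a ribbon singularity the two cores pass through the centre of the singular disc and hence actually meet in $\R^4$, so these ``real'' crossings are distinguishable from ``projection artefact'' crossings. At each projected ribbon singularity, record a classical crossing whose over/under data is determined canonically by the preimage structure (the tube whose preimage disc has essential boundary gives the unbroken strand, and the tube with interior preimage disc gives the broken strand) and whose sign is determined by the relative orientations of the two cores. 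At every remaining double point of the projection, record a virtual crossing. The output is a welded diagram.

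The principal technical task is showing $\Conn$ is well-defined on generalised ribbon isotopy classes. A generic regular homotopy through strict ribbon immersions affects the projection only by local changes within the purely virtual portion of the diagram (absorbed by the rerouting convention and planar isotopy), or by moving a tube connection across a classical crossing from the side of the unbroken strand, which gives an OC move. A cusp moment along a core corresponds to an R1 move. A type 2 generalised ribbon singularity produces or cancels a pair of adjacent ribbon singularities of opposite sign, translating to an R2 move; a type 3 moment locally reshuffles three tubes meeting in a common 4-ball, and a direct computation in the coordinate model of \cref{generalised singularities for ribbons} matches this with an R3 move. Each correspondence requires a local coordinate verification, and this case analysis is the main obstacle of the proof.

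Finally, the two composition identities are established as follows. For $\Conn \circ \Tube^\bullet = \id$, the equality is essentially tautological once a suitable projection is chosen: the spun crossings constructed by $\Tube^\bullet$ project to classical crossings with the prescribed over/under and sign data, and the connecting $D^2 \times I$ tubes project to planar paths between them, recovering the original diagram up to rerouting. For $\Tube^\bullet \circ \Conn = \id$, the strategy is to show that every solid ribbon torus link $L$ is generalised-ribbon-isotopic to a standard form in which its ribbon singularities are isolated inside disjoint 4-balls as spun crossings, joined by regularly embedded tube segments $D^2 \times I$. Isolating the singularities is achieved by generic isotopy, and uniqueness of the connecting tubes up to isotopy is exactly the content of \cref{well-defined}. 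In this standard form, $L$ is visibly $\Tube^\bullet(\Conn(L))$, completing the proof.
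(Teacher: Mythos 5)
Your overall architecture (define $\Conn$, show it is invariant under generalised ribbon isotopy by a case analysis on cusps and Type~2/3 singularities, then verify $\Conn\circ\Tube^\bullet=\id$ tautologically and $\Tube^\bullet\circ\Conn=\id$ by putting the link in a standard form and invoking \cref{well-defined}) matches the paper's. However, your construction of $\Conn$ contains a genuine error. You propose to read off the diagram from a generic projection of the directed cores to $\R^2$, distinguishing ``real'' crossings from projection artefacts by the claim that at a ribbon singularity ``the two cores pass through the centre of the singular disc and hence actually meet in $\R^4$.'' This is false: the essential preimage $\varepsilon\subseteq\T_l$ does meet the core of $\T_l$, but the contractible preimage $\gamma\subseteq\T_k$ is just some disc in the interior of $\T_k$ and need not meet the core of $\T_k$ at all; and even if both cores happened to pierce the singular disc, they would generically do so at distinct points and hence still not meet in $\R^4$. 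Since two disjoint circles in $\R^4$ generically stay disjoint, your criterion would classify \emph{every} double point of the projection as virtual, and the construction collapses. (Your over/under assignment is also reversed relative to the paper's convention: the tube containing the \emph{essential} preimage is the one threaded through the other, and corresponds to the \emph{under}-strand.) The paper avoids all of this by defining $\Conn$ with no projection whatsoever: the crossings are the ribbon singularities with signs read from orientation data, and they are connected according to the partial cyclic order of the singularity preimages along the core of each torus, with virtual crossings inserted as needed and the ambiguity within a chamber absorbed by the OC move.

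A second, smaller gap is in your ``standard form'' step for $\Tube^\bullet\circ\Conn=\id$: isolating the ribbon singularities inside disjoint $4$-balls as spun crossings is not achieved by ``generic isotopy'' alone. One must first straighten each essential preimage to a flat meridian disc, which requires the innermost-loop argument (removing circles of intersection with the flat disc bounded by $\partial\varepsilon$ by filling spheres) together with Alexander's trick to normalise the contractible preimages into single slices $B^2\times\{t\}$. Only after this normalisation does the uniqueness of the connecting tubes from \cref{well-defined} apply. With the definition of $\Conn$ replaced by the combinatorial one and this flattening step supplied, your proof outline would align with the paper's.
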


We define the Connection map for solid ribbon torus links (ignoring isotopies) first, in two separate parts:
\begin{enumerate}
    \item $\sigma_{\Conn}(R)$ describes the {\it sign} of each crossing using the orientation of the tori around each singularity.
     \item $t_{\Conn}(R)$ prescribes how to connect the  crossings in the welded diagrams, from the combinatorial structure of the singularities of the solid ribbon torus links
\end{enumerate}

For a ribbon immersion $R:\cup_{i\in \{1,...,n\}} \mathbb{T}_i \to \mathbb{R}^4$,
let $\Delta=\{\delta_1, \delta_2, \dots, \delta_k\}$ denote the set of ribbon singularities. These correspond to the set of (classical) crossings in the welded diagram.

To each ribbon singularity $\delta$ we associate a sign. Let $\gamma\subseteq \mathbb{T}_k$ and $\varepsilon \subseteq \mathbb{T}_l$ denote, respectively, the contractible and essential preimages of $\delta$. Let $x$ be a point in the interior of $\delta$, and let $x_c\in \gamma$ and $x_e\in \varepsilon$ denote the preimages of $x$. Choose a positive basis in $T_{x_c} \mathbb{T}_k$, and denote by $\{u_1, u_2, u_3\}$ its push-forward in $T_x (\mathbb{R}^4)$. Let $v \in T_x(\mathbb{R}^4)$ be a push-forward of a normal vector to $T_{x_e}(\varepsilon) \subseteq T_{x_e} \mathbb{T} _l$, pointing ahead of  $\varepsilon$ is the direction of the core. Then the sign of $\delta$ is defined to be the sign of the basis $u_1, u_2,..., u_3, v$ for $T_x(\mathbb{R}^4)$. 

\begin{definition}[$\sigma_{\Conn(R)}$]
The map
$$\sigma_{\Conn(R)}:\Delta \rightarrow \{-1,1\} $$
assigns to each ribbon singularity its sign.
\end{definition}

Thus, from a solid ribbon torus link, we have produced a list of oriented crossings. Place these oriented crossings arbitrarily in disjoint discs in the plane (see the red, green and blue discs in \cref{fig: Conn Map Example}). Here {\em arbitrarily} means that the location, direction (strands pointing up, down, left or right), and size of the crossings are all chosen arbitrarily, as long as the crossings occupy disjoint discs, numbered, and are of the correct sign. What remains is to describe how to connect these crossings to produce a welded diagram.

\begin{figure}
    \centering
    \includegraphics{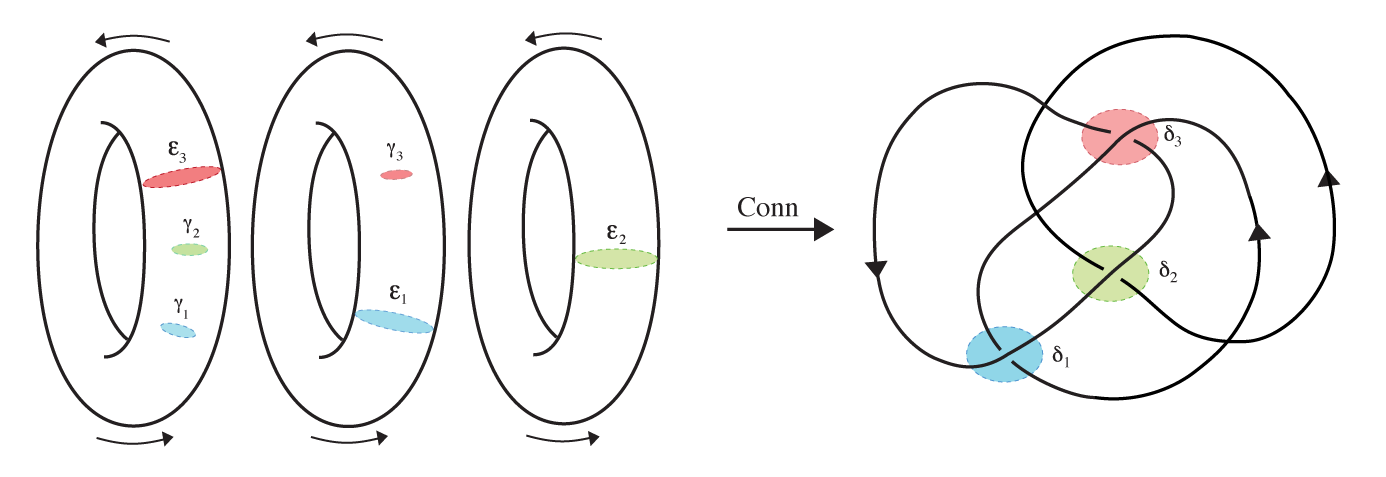}
    \caption{On the left are the preimages of the ribbon singularities. On the right is the welded diagram: the output of the Connection map.}
     \label{fig: Conn Map Example}
\end{figure}

Denote the set of essential preimages of ribbon singularities by $E=\{\varepsilon_1,...,\varepsilon_n\}$ and the contractible preimages by $\Gamma=\{\gamma_1,...,\gamma_k\}$.
The set $E \cup \Gamma$ is partitioned into $n$ subsets, according to which of the $n$ tori each preimage falls in:
$$E\cup \Gamma = \sqcup_{i=1}^n S_i.$$
For example, in \cref{fig: Conn Map Example}, $S_1=\{\gamma_1, \gamma_2, \varepsilon_3\}, S_2=\{\varepsilon_1, \gamma_3\}$, and $S_3=\{\varepsilon_2\}$.

\begin{definition}[$t_{\Conn(R)}$] The component $t_{\Conn(R)}$ is a partial cyclic ordering on each of the partition sets $S_i$.
Each essential preimage intersects the core of its component $\mathbb{T}_i$ at least once. Since the core is oriented, the order of first intersections is a complete cyclic order on the essential singularities in $S_i$.
Furthermore, the essential singularities in $\mathbb{T}_i$ partition $\mathbb{T}_i$ into chambers. Each contractible singularity in $\mathbb{T}_i$ belongs to a unique chamber, and thus is cyclically comparable to the essential singularities in $\mathbb{T}_i$, and to the contractible singularities in other chambers. Contractible singularities in the same chamber are not comparable to each other.
\end{definition}

This partial cyclic order on the $\{S_i\}_{i=1}^n$ determines the welded diagram, as follows.  Each essential singularity $\varepsilon_r \in S_i$ corresponds to the under-strand of the crossing $r$. Identify the next essential singularity $\varepsilon_s \in S_i$ in the partial cyclic order $t_{\Conn(R)}$ on $S_i$, noting that possibly $r=s$. There is possibly a set of contractible singularities $\{\gamma_{p_1},...,\gamma_{p_m}\}$ between $\varepsilon_r$ and $\varepsilon_s$ in the partial order of $S_i$. Connect the outgoing under-strand of crossing $r$ to the incoming under-strand of crossing $s$ via the over-strands of crossings $p_1, \ldots ,p_m$, introducing virtual crossings if necessary. Note that different orderings of going over the crossings $p_1, \ldots ,p_m$ are all OC-equivalent (as in the middle tube of \cref{OC}). Continue this process until all crossings have been connected.
A special case is when a torus contains only contractible singularities, corresponding to over-strands only. In this case, connect those over-strands in a loop, in any order, introducing virtual crossings as needed.

\begin{figure}
    \centering
    \includegraphics[height=8cm]{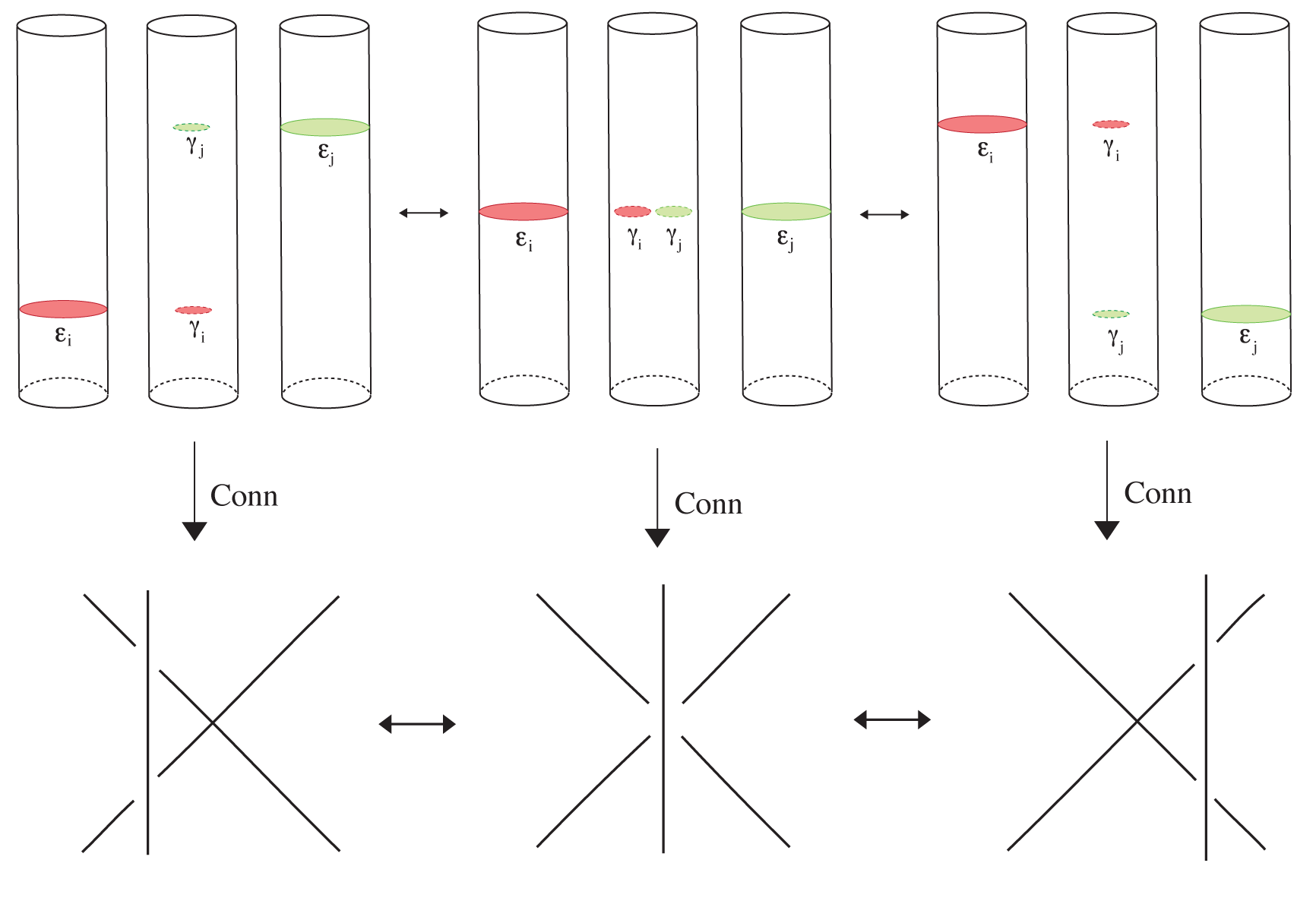}
    \caption{Differently ordered connections of over-strands corresponding to contractible singularities within a chamber are OC-equivalent.}
    \label{OC}
\end{figure}

\begin{example}
The left hand side of \cref{fig: Conn Map Example} shows the preimages of ribbon singularities for a solid ribbon torus link: for $i=1,2,3$, the contractible preimage of $\delta_i$ is labeled $\gamma_i$, and the essential preimage is labelled $\varepsilon_i$. 

The $\Conn$ map provides the information for constructing the welded diagram shown on the right of \cref{fig: Conn Map Example}. We know that there are three singularities, corresponding to three crossings, and $\sigma_{\Conn(R)}(\delta_i)$ specifies their signs. We place the three crossings on the right (in any position), highlighted red, blue, and green. 

Based on the first torus, we need to connect the outgoing under-strand of $\delta_3$ (red) to the incoming under-strand of the same crossing, and on the way cross over $\delta_1$ and $\delta_2$ in any order. Then, based on the second torus, we connect the outgoing under-strand of $\delta_1$ (blue) to the incoming under-strand of the same crossing, on the way crossing over $\delta_3$ (green). This loop introduces a virtual crossing. Finally, based on the third torus we connect the outgoing under-strand of $\delta_2$ (green) to the incoming under-strand of the same crossing, introducing some virtual crossings along the way.
\end{example}

Working towards the proof of \cref{Thm: Conn}, we first prove that the $\Conn$ map -- which {\em a priori} maps solid ribbon immersions to welded diagrams modulo the OC move -- descends to a well-defined map of solid ribbon torus links up to generalised ribbon isotopy, to welded diagrams modulo welded Reidemeister moves. 

\begin{theorem}\label{Isotopy=Reid}
If $R_0$ and $R_1$ are solid ribbon torus links related by generalised ribbon isotopy, then $\Conn(R_0)$ and $\Conn(R_1)$ are related by Reidemeister moves of welded link diagrams.
\end{theorem}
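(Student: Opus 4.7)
The plan is to decompose a generalised ribbon isotopy $\{R_t\}_{t\in[0,1]}$ from $R_0$ to $R_1$ into finitely many generic segments separated by finitely many singular moments. On each generic segment $R_t$ is a regular homotopy through strict ribbon immersions, and I will show that $\Conn(R_t)$ is constant up to the ambiguities already built into the $\Conn$ construction. At each singular moment $R_t$ passes through a Type 0 (birth/death cusp), Type 2 (tangential), or Type 3 (nested) generalised ribbon singularity; I will show each such transition changes $\Conn(R_t)$ by exactly one welded Reidemeister move.

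For the generic segments the key observation is that every piece of discrete data feeding into $\Conn$ is locally constant in $t$. The cardinality $|\Delta|$ of ribbon singularities cannot change without a birth/death event; the partition $E\cup\Gamma=\sqcup_i S_i$ of preimages among the components is invariant; whether a preimage is contractible or essential is a topological property of its boundary class in $\partial\T_i$, and so cannot change continuously; the sign $\sigma_{\Conn(R_t)}(\delta)$ is the discrete-valued orientation of a continuously varying basis of $T_x\R^4$, hence locally constant; and the partial cyclic order $t_{\Conn(R_t)}$ can only change when two preimages collide along a core or exchange chambers, which is a Type 2 or Type 3 moment. Thus the combinatorial output is unchanged, and the remaining freedom in $\Conn$ -- the planar placement of crossings and the routing of virtual connecting paths -- is precisely what the welded equivalence absorbs, via planar isotopy of four-valent graphs and the rerouting convention of \cref{def:WeldedDiag}.

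At each singular moment I will read off the welded Reidemeister move directly from the local normal forms of \cref{generalised singularities for ribbons}. A Type 0 cusp births or annihilates a single ribbon singularity on one torus, giving an R1. A Type 2 tangency births or annihilates a pair of oppositely signed ribbon singularities between the same two discs; by the $\Conn$ recipe these form a bigon, matching an R2. A Type 3 moment replaces three pairwise singularities among three 3-balls by the same three singularities with the cyclic order along each affected core reversed, matching an R3 once one identifies which preimage in each pair is contractible and which essential (equivalently, which strand is over and which is under).

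The main obstacle I expect is the Type 3 case: one must verify, for every admissible distribution of contractible versus essential roles across the three local 3-balls and for all consistent orientation configurations, that the induced change in the partial cyclic orders on the affected $S_i$ produces exactly one of the R3 variants listed in \cref{Reid_moves}. The bookkeeping of the connecting paths -- which after the move traverse a reshuffled sequence of over-strands and may gain or lose virtual crossings -- is where the OC relation and the rerouting convention do real work, and this is the step of Audoux's sketch that most needs to be made precise.
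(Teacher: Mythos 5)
Your proposal follows essentially the same route as the paper's proof: decompose the isotopy into generic segments (where the combinatorial input to $\Conn$ -- signs, partition, partial cyclic orders -- is locally constant) and discrete singular moments, then match births/deaths of a single singularity to R1, Type 2 tangencies to R2, and Type 3 nestings to R3, with the OC relation and rerouting absorbing the residual ambiguity. One small terminological point: the paper treats the R1 cusp within the ``no generalised singularities'' case rather than as a ``Type 0'' singular moment (Type 0 is just an ordinary ribbon singularity), but this does not affect the substance of the argument.
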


\begin{proof}
The image of the $\Conn$ map depends only on the combinatorial structure of the preimages of singularities, and the orientations around the ribbon singularities of a solid ribbon torus link. Hence, we need to analyse how this combinatorial structure changes throughout a generalised ribbon isotopy. 

Let $\eta: (\cup_i (B^2\times I)_i)\times I \to B^3\times I$ be a generalised ribbon isotopy of ribbon torus links, and let $\eta_t$ denote the immersion $\eta_t: (\cup_i (B^2\times I)_i)\times \{t\} \to B^3\times I$. Assume that $\eta_0 \equiv R_0$ and $\eta_1 \equiv R_1$. Recall at finitely many discrete values of $t$, $\eta_t$ may include generalised ribbon singularities, as in \cref{generalised singularities for ribbons}.

In the preimage, $\eta$ manifests as a movie of the essential and contractible preimages of the singularities moving around within the tori, and appearing or disappearing at discrete times. With the exception of finitely many discrete values of $t$, this does not change the combinatorial structure (partial ordering of essential singularities or signs), and hence it does not affect the value of the $\Conn$ map. The goal is to analyse the discrete values of $t$ where the value of the $\Conn$ map does change. We base the case analysis on whether a generalised singularity occurs at $t$, and which kind.  

\medskip 

\noindent {\em Case 1: No Generalised Singularities.} In the preimage of a solid ribbon torus link, the preimages of the singularities do not intersect. Therefore, an isotopy which does not involve generalised singularities cannot change the cyclic ordering of the essential preimages of existing essential singularities, and the signs of the singularities are also rigid. However, the value of the Connection map may change, as singularities may appear or disappear. Since there are no tangential intersections, each appearing/disappearing singularity involves only one tube, and the preimages must be adjacent. By local deformation one may arrange for such appearances/disappearances to occur one at a time. A single instance of this is shown in \cref{Reid1}. Observe that the effect of this local isotopy on the value of the Conn map is an R1 move.

\begin{figure}
    \centering
    \includegraphics[width=13cm]{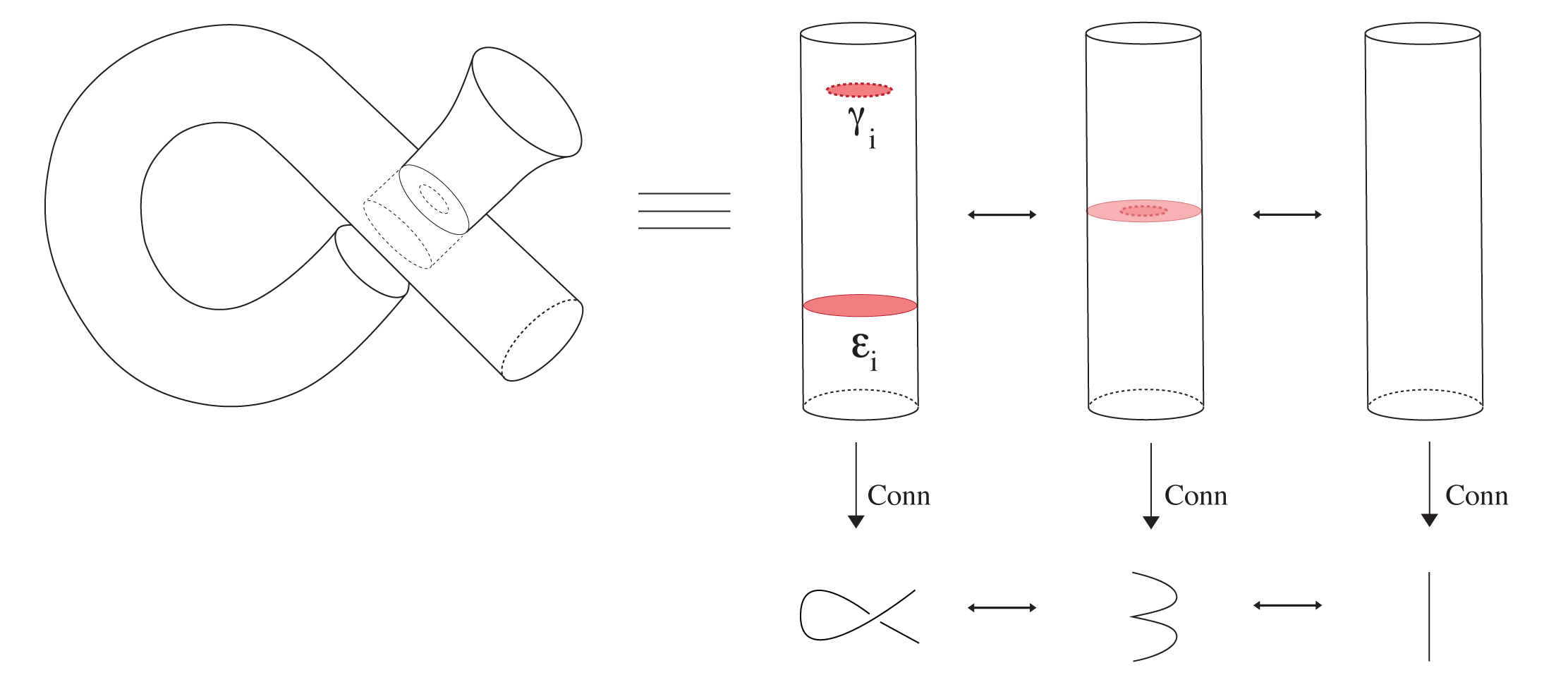}
    \caption{An isotopy creating a single ribbon singularity via a cusp.}
    \label{Reid1}
\end{figure} 

\medskip
\noindent {\em Case 2: Type 2 generalised ribbon singularities.}
A type 2 (tangential) generalised ribbon singularity can be eliminated by a small local perturbation two ways: one resulting in no singularity, the other in two (transverse) ribbon singularities. In the image of the Connection map, this amounts to a Reidemeister 2 move: see \cref{Reid2}.

\begin{figure}
    \centering
    \includegraphics[height=8cm]{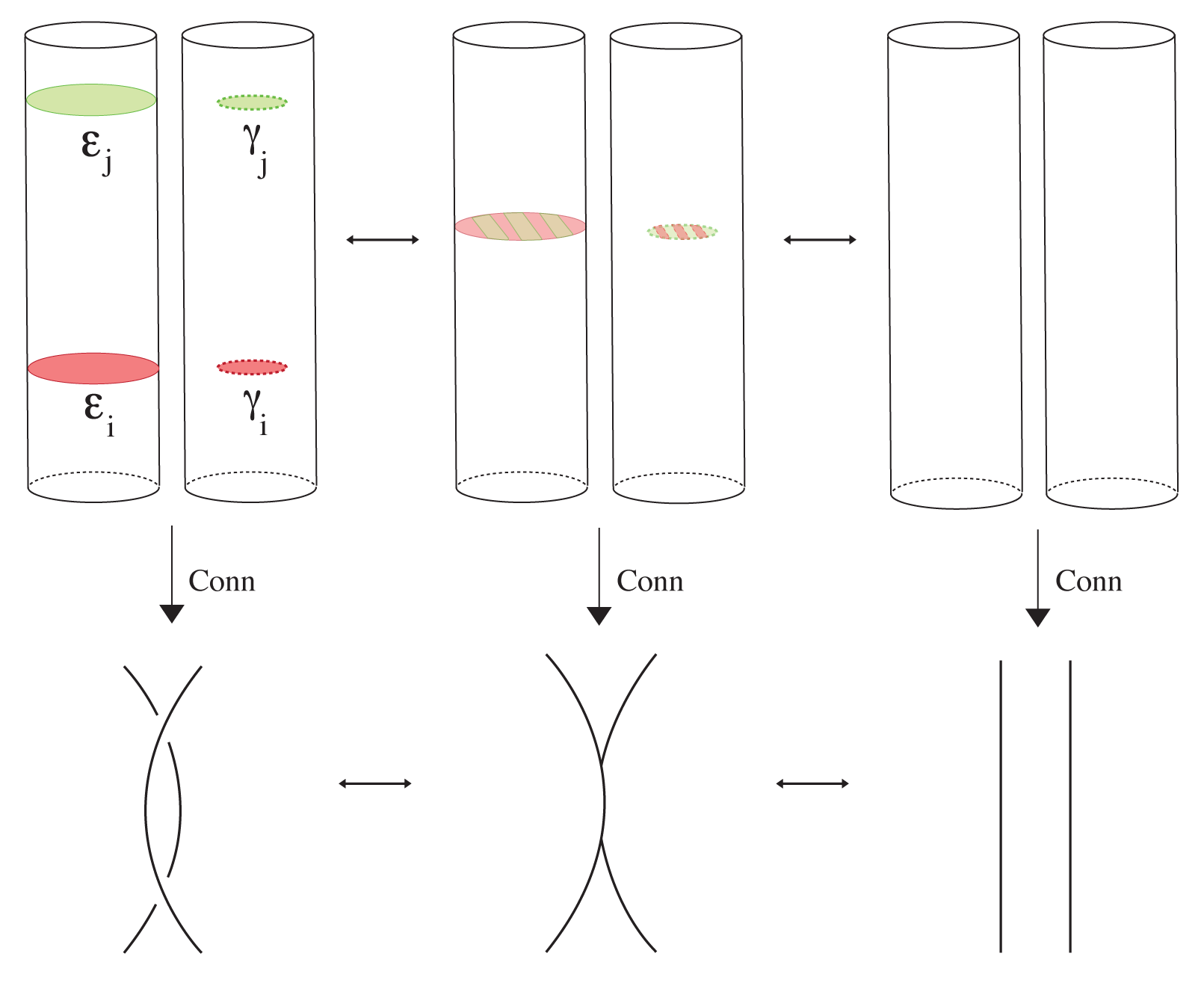}
    \caption{Two possible resolutions of a Type 2 (tangential) generalised ribbon singularity.}
    \label{Reid2}
\end{figure} 

\medskip
\noindent {\em Case 3: Type 3 generalised ribbon singularities}. The preimage of a nested singularity is shown in \cref{Reid3}, along with the two possible ways of resolving it. Each of the resolutions results in three ribbon singularities. The difference between the two resolutions in the image of the Connection map is a Reidemeister 3 move.  
\begin{figure}
    \centering
    \includegraphics{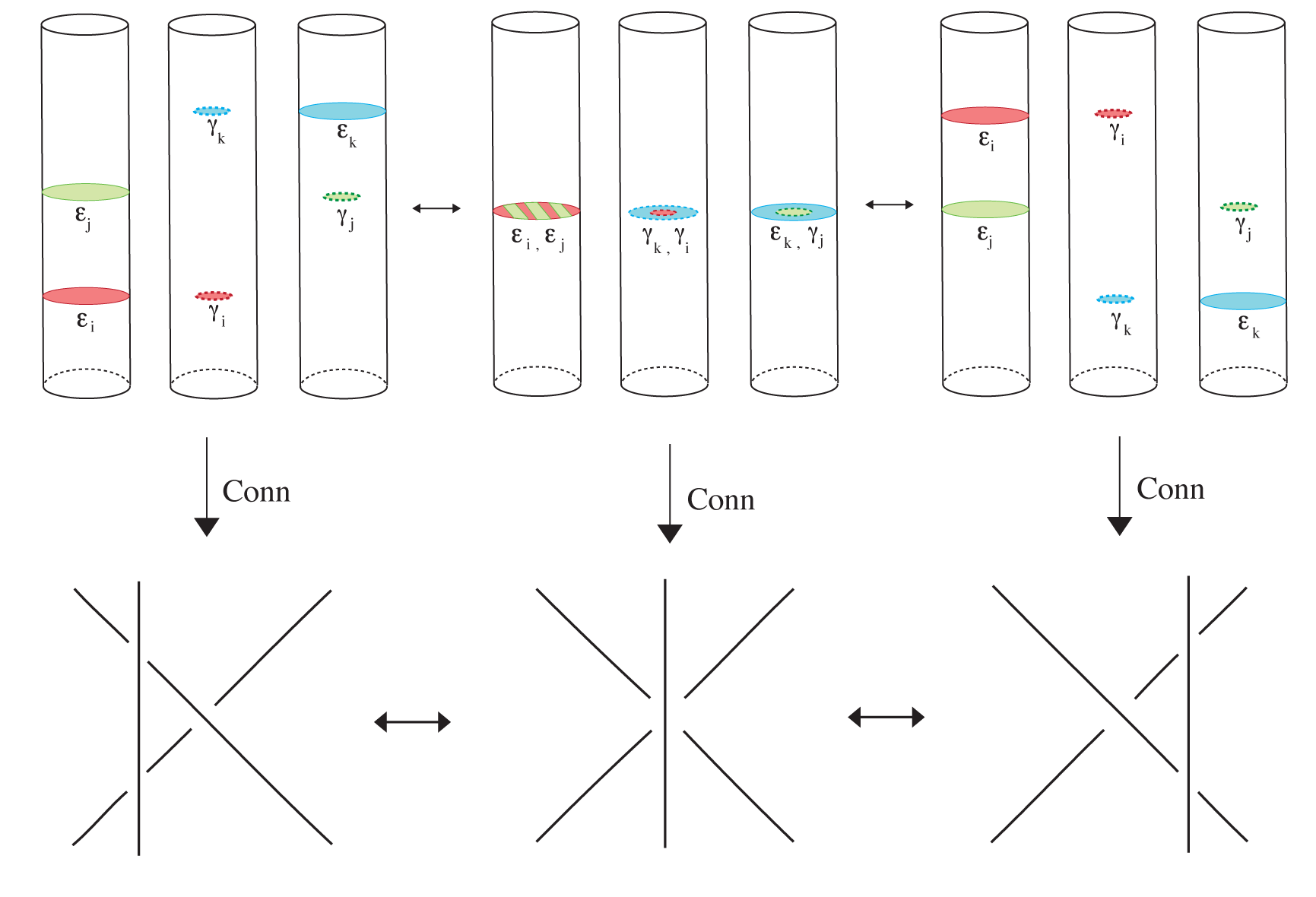}
    \caption{Two possible resolutions of a Type 3 (nested) generalised ribbon singularity.}
    \label{Reid3}
\end{figure}

Hence, isotopies of solid ribbon torus links manifest as Reidemeister moves in the image of the Connection map. 
\end{proof}

\begin{cor}\label{cor:ConnWellDef}
The map $\Conn$ on solid ribbon torus links descends to a well-defined map on solid ribbon torus links up to generalised ribbon isotopies $$\Conn: \frac{\{solid \ ribbon \ torus\ links\}}{generalised \ ribbon \ isotopy}\rightarrow \frac{\{welded \ diagrams\}}{welded \ equivalence}.$$ 
\end{cor}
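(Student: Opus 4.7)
The plan is to assemble this corollary from two inputs: a well-definedness check for $\Conn$ at the level of solid ribbon torus links (before quotienting by isotopy), and the content of Theorem~\ref{Isotopy=Reid}.

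First I would check that the Conn map, as defined on individual solid ribbon torus links, already lands in the quotient $\{\text{welded diagrams}\}/\text{welded equivalence}$ despite the many arbitrary choices made in its construction. Those choices are: the planar locations, sizes and directions of the disjoint discs containing the crossings; the cyclic order chosen for traversing the over-strands corresponding to contractible preimages within a single chamber (which are not comparable under the partial cyclic order $t_{\Conn(R)}$); and the specific routing of the purely virtual arcs that realise the connections prescribed by $t_{\Conn(R)}$. The first family of choices is absorbed by planar isotopy of four-valent graphs; the second by the OC-equivalence established in Figure~\ref{OC}; and the third by the rerouting convention built into Definition~\ref{def:WeldedDiag}. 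Together these observations promote $\Conn$ to a well-defined map from solid ribbon torus links (without any isotopy identifications) to welded diagrams up to welded equivalence.

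Next I would apply Theorem~\ref{Isotopy=Reid} verbatim: given two solid ribbon torus links $R_0$ and $R_1$ related by a generalised ribbon isotopy, their Conn images are related by a finite sequence of welded Reidemeister moves, hence represent the same welded equivalence class. Combined with the previous paragraph, this shows that $\Conn$ factors through the quotient by generalised ribbon isotopy on the domain and the quotient by welded equivalence on the codomain, which is exactly the statement of the corollary.

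Since Theorem~\ref{Isotopy=Reid} carries essentially all of the geometric content, I do not expect a serious obstacle here. The only mildly subtle point is that the well-definedness check in the first step must be done \emph{before} invoking Theorem~\ref{Isotopy=Reid}, because the latter implicitly treats the Conn image as an element of the welded-equivalence quotient; but this is a bookkeeping observation rather than a real difficulty.
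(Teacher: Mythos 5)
Your proposal is correct and matches the paper's (implicit) argument: the paper likewise notes that $\Conn$ is \emph{a priori} only defined up to the OC move (and the other choices you list, which are absorbed by welded equivalence), and then derives the corollary directly from Theorem~\ref{Isotopy=Reid}. Your explicit separation of the choice-independence check from the isotopy-invariance step is a reasonable elaboration of what the paper leaves as immediate.
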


With \cref{Isotopy=Reid} and \cref{cor:ConnWellDef} in place, the rest of the proof of the main Theorem uses the ideas of \cite[Prop. 3.7]{Audoux}.

\begin{proof}[Proof of \cref{Thm: Conn}]
We need to show that the $\Conn$ map is surjective and injective.
Surjectivity is immediate from \cref{well-defined}, as $\Tube^\bullet$ is a right-sided inverse to $\Conn$: given a welded diagram $D$, apply the $\Tube^\bullet$ map to produce a solid ribbon torus link $\Tube^\bullet(D)$. Then by definition, $\Conn(\Tube^\bullet(D))=D$.

For injectivity, consider two solid ribbon torus links $R_1$ and $R_2$ such that $\Conn(R_1)\sim \Conn(R_2)$; we need to show that $R_1$ is generalised ribbon isotopic to $R_2$. By \cref{R-moves rep isotopies}, Reidemeister moves represent local generalised ribbon isotopies under the $\Tube^\bullet$ map, hence we may assume that $\Conn(R_1)= \Conn(R_2)$.

The equality $\Conn(R_1)=\Conn(R_2)$ induces a one-to-one correspondence between the ribbon singularities
\[\Delta_{R_1}\xleftrightarrow{\xi}\Delta_{R_2}.
\] 
Number the crossings of $\Conn(R_1)$ and use the bijection $\xi$ to number the crossings of $\Conn(R_2)$ in the same order.

By local isotopy, the essential singularities can be deformed to essential horizontal discs:
The boundary of an essential singularity is an embedded ${S}^1$ along the boundary of the tube, and thus can be transformed by local isotopy to a horizontal circle. Then we use the standard argument to deform the interior of the disc to a horizontal disc: take a tubular neighbourhood of the singularity, and
\begin{enumerate}
    \item If the essential preimage does not intersect the horizontal disc filling its boundary, then their union is a 2-sphere, which admits a filling and hence an isotopy of the essential preimage to the flat disc.
    \item If the interior of the essential preimage intersects the interior of the flat disc filling the boundary ${S}^1$, first ensure by local isotopy that all intersections are transverse double points, see on the left of \cref{inner_loops1}. Thus, the intersections form a disjoint union of finitely many nested closed loops, as on the right of \cref{inner_loops1}. Select a loop innermost in the nesting, this loop can be removed by filling the sphere formed by its interior disc and the flat disc it bounds. Thus, all intersections can be recursively removed to reduce to the previous case.
    
\end{enumerate}
\begin{figure}
    \centering
    \includegraphics[height=5cm]{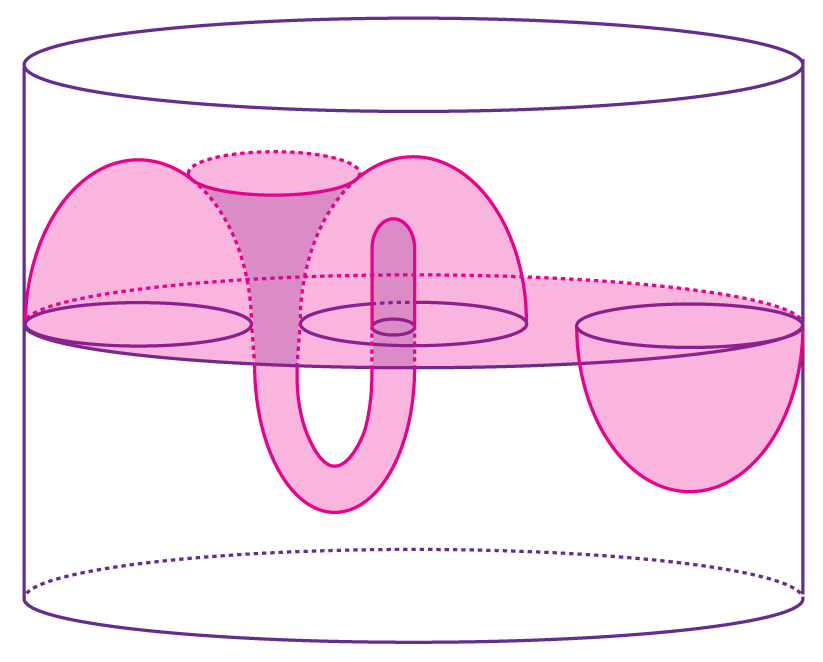}\hspace{1cm}
    \raisebox{5mm}{\includegraphics[height=4cm]{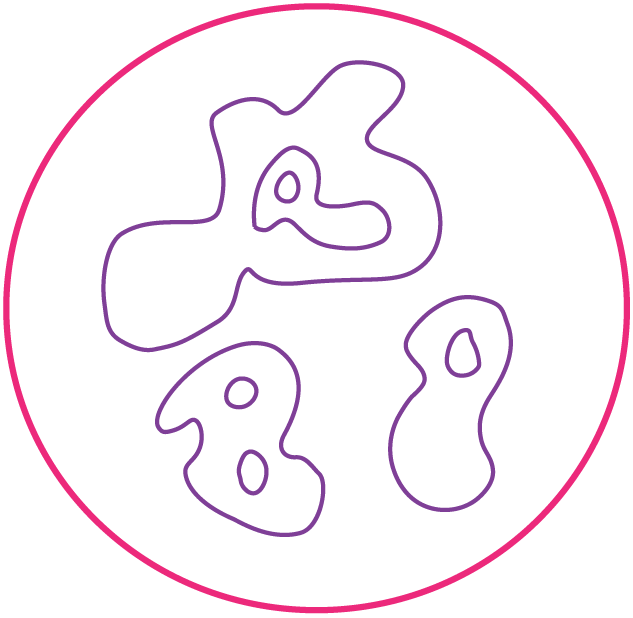}}
    \caption{Deforming the interior of an essential singularity.}
    \label{inner_loops1}
\end{figure}

By a further re-parametrisation, obtain $\tilde{R}_1$ and $\tilde{R}_2$ with the properties:
\begin{enumerate}
    \item Each contractible preimage of a ribbon singularity $\delta$ belongs to the interior of a slice $\left(B^2 \times \{t_{\delta}\} \right)_{i_{\delta}}$ of $\Tilde{R}_{1}$ or $\Tilde{R}_{2}$ for some $i_{\delta}\in \{1,2,\dots,n\}$ and $t_{\delta}\in I$;
    \item for every ribbon singularity $\delta$ of $\tilde{R}_1$, $\left(i_{\xi(\delta)}, t_{\xi(\delta)} \right)=\left(i_{\delta},t_{\delta}\right)$.
\end{enumerate}
The first point is achieved by Alexander's Trick \cite{Alexander}. The second point is achievable by re-parameterising because the partial ordering of the essential singularities are the same. 

For each ribbon singularity $\delta_i$ of $\tilde{R}_1$, fix a small tubular neighbourhood $U_i$ of its image: a 4-ball, which intersects $\tilde{R}_1$ in exactly four 2-balls. Then, since the signs and ordering of the singularities agree, $\tilde{R}_2$ can be isotoped such that $\tilde{R}_1 \cap U_i = \tilde{R}_2 \cap U_i$, that is, the singularities in the two links happen at exactly the same place in exactly the same way. The rest of the proof then amounts to isotoping the solid tubes which connect the singularities to match, which is the same as the proof of the well-definedness of the $\Tube^\bullet$ map (\cref{well-defined}).
\end{proof}

\end{document}